\newtheorem{theo}{Theorem}[section]
\newtheorem{lemm}[theo]{Lemma}
\newtheorem{defi}[theo]{Definition}
\newtheorem{coro}[theo]{Corollary}
\newtheorem{rema}[theo]{Remark}
\numberwithin{equation}{section}
\newcommand{\bal}{\begin{align}}
\newcommand{\bbal}{\begin{align*}}
\newcommand{\beq}{\begin{equation}}
\newcommand{\eeq}{\end{equation}}
\newcommand{\bca}{\begin{cases}}
\newcommand{\eca}{\end{cases}}
\newcommand{\pa}{\partial}
\newcommand{\na}{\nabla}
\newcommand{\De}{\Delta}
\newcommand{\cd}{\cdot}
\newcommand{\dd}{\mathrm{d}}
\newcommand{\R}{\mathbb{R}}
\newcommand{\D}{\mathrm{div}}
\newcommand{\Z}{\mathbb{Z}}
\begin{document}

\subjclass[2010]{35Q35}
\keywords{The Euler-Poincar\'{e} system, Non-uniform continuous dependence}

\title[Euler-Poincar\'{e} system]{Non-uniform continuous dependence on initial data of solutions to the Euler-Poincar\'{e} system}

\author[J. Li]{Jinlu Li}
\address{School of Mathematics and Computer Sciences, Gannan Normal University, Ganzhou 341000, China}
\email{lijl29@mail2.sysu.edu.cn}

\author[L. Dai]{Li Dai}
\address{School of Mathematics and Computer Sciences, Gannan Normal University, Ganzhou 341000, China}
\email{daili221726@163.com}

\author[W. Zhu]{Weipeng Zhu}
\address{School of Mathematics and Information Science, Guangzhou University, Guangzhou, 510006, China}
\email{mathzwp2010@163.com}

\begin{abstract}
In this paper, we investigate the continuous dependence on initial data of solutions to the Euler-Poincar\'{e} system. By constructing a sequence approximate solutions and calculating the error terms, we show that the data-to-solution map is not uniformly continuous in Sobolev space $H^s(\R^d)$ for $s>1+\frac d2$.
\end{abstract}

\maketitle

\section{Introduction and main result}

In the paper, we consider the following Cauchy problem of the Euler-Poincar\'{e} system:
\begin{equation}\label{E-P}
\begin{cases}
\partial_tm+u\cdot \nabla m+(\nabla u)^Tm+(\mathrm{div} u)m=0, \qquad (t,x)\in \R^+\times \R^d,\\
m=(1-\De)u,\qquad (t,x)\in \R^+\times \R^d,\\
u(0,x)=u_0,\qquad x\in \R^d,
\end{cases}
\end{equation}
where $u=(u_1,u_2,\cdots,u_d)$ denotes the velocity of the fluid, $m=(m_1,m_2,\cdots,m_d)$ represents the momentum. The notation $(\nabla u)^T$ represents the transpose of the matrix $\nabla u$.

The system \eqref{E-P} is the classical Camassa-Holm (CH) equation for $d=1$, while it is also called the Euler-Poincar\'{e} equations in the higher dimensional case $d\geq1$. The Camassa-Holm equation can be regarded as a shallow water wave equation \cite{Camassa,Camassa.Hyman,Constantin.Lannes}.  It is completely integrable \cite{Camassa,Constantin-P}, has a bi-Hamiltonian structure \cite{Constantin-E,Fokas}, and admits exact peaked solitons of the form $ce^{-|x-ct|}$, $c>0$, which are orbitally stable \cite{Constantin.Strauss}. We have to say that the peaked solitons present the characteristic for the travelling water waves of greatest height and largest amplitude and arise as solutions to the free-boundary problem for incompressible Euler equations over a flat bed, cf., \cite{Constantin2,Constantin.Escher4,Constantin.Escher5,Toland} and references therein. The local well-posedness and ill-posedness for the Cauchy problem of the CH equation in Sobolev spaces and Besov spaces was discussed in \cite{Constantin.Escher,Constantin.Escher2,d1,d3,Guo-Yin,Li-Yin,Ro}. The existence and finite time blow-up of strong solutions to the CH equation was shown in \cite{Constantin,Constantin.Escher,Constantin.Escher2,Constantin.Escher3}. For the existence and uniqueness of global weak solutions to the CH equation, we refer the reader to see \cite{Constantin.Molinet, Xin.Z.P}. The global conservative and dissipative solutions of CH equation were studied in \cite{Bressan.Constantin,Bressan.Constantin2}. The non-uniform dependence on initial data for the CH equation was discussed in \cite{H-K,H-K-M}.

For the Euler-Poincar\'{e} system, it was first theoretically studied by Chae and Liu in the pioneering work \cite{Chae.Liu}. The authors obtained the local well-posedness in Hilbert spaces $m_0\in H^{s+\frac d2},\ s\geq2$ and also gave a  blow-up criterion, zero $\alpha$ limit and the Liouville type theorem. In \cite{L.Y.Z}, Li, Yu and Zhai showed that the solution to \eqref{E-P} with a large class of smooth initial data blows up in finite time or exists globally in time, which reveals the nonlinear depletion mechanism hidden in the Euler-Poincar\'{e} system. By applying Littlewood-Paley theory, the local existence and uniqueness in Besov spaces $B^s_{p,r},\ s>\max\{\frac32,1+\frac dp\}$ and $s=1+\frac dp,\ 1\leq p\leq 2d,\ r=1$, were established by Yan and Yin \cite{Y.Y}.
Lately, Li and Yin \cite{Li-Yin1} proved that the corresponding solution is continuous dependence for the initial data in Besov spaces. Zhao, Yang and Li \cite{Z.Y.L} showed that the solution map of the periodic Euler-Poincar\'{e} system is not uniformly continuous in Besov space $B^s_{2,r},s>1+\frac d2$.

In this paper, inspired by \cite{H-H,H-M},  we will show that the solution map of \eqref{E-P} is not uniformly continuous dependence in Sobolev space $H^s(\R^d)$. Compared with the Camassa-Holm type equation in one dimension, we need choose the suitable approximate solutions and calculate the more error terms.

According to \cite{Y.Y}, we can transform \eqref{E-P} into the following form:
\begin{align}\label{E-P1}
\partial_tu+u\cdot \nabla u=P(u,u):=-(1-\De)^{-1}\D Q(u,u)-(1-\De)^{-1}R(u,u),
\end{align}
where
\bbal
&Q(u,v)=\nabla u\nabla v+\nabla u(\nabla v)^T-(\nabla u)^T\nabla v-(\mathrm{div} u)\nabla v+\frac12\mathbf{I}(\nabla u:\nabla v),
\\&R(u,v)=(\mathrm{div} u)v+ u\cd \na v.
\end{align*}

Then, we have the following result.

\begin{theo}\label{th2}
Let $d\geq 2$ and $s>1+\frac d2$. The data-to-solution map for the Euler-Poincar\'{e} system \eqref{E-P} is not uniformly continuous from any bounded subset in $H^s$ into $\mathcal{C}([0,T];H^s(\R^d))$. That is, there exists two sequences of solutions $u^n$ and $v^n$ such that
\bbal
&||u^n_0||_{H^s}+||v^n_0||_{H^s(\R^d)}\lesssim 1, \quad \lim_{n\rightarrow \infty}||u^n_0-v^n_0||_{H^s(\R^d)}= 0,
\\&\liminf_{n\rightarrow \infty}||u^n(t)-v^n(t)||_{H^s(\R^d)}\gtrsim |\sin t|,  \quad t\in[0,T].
\end{align*}
\end{theo}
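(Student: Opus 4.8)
The plan is to prove non-uniform dependence by the method of approximate solutions, in the spirit of \cite{H-H,H-M}. First I would introduce two families of smooth approximate solutions indexed by $\omega\in\{-1,1\}$ and $n\in\N$ large, built from a slowly varying drift and a high-frequency wave packet,
\[
u^{\omega,n}(t,x)=\omega n^{-1}\mathbf{g}(x)+n^{-s}\cos(nx_1-\omega t)\,\mathbf{h}(x),
\]
where $\mathbf{g},\mathbf{h}\in C_c^\infty(\R^d;\R^d)$ are fixed vector fields whose directions are chosen so that the leading-order nonlinear interaction cancels: concretely $\mathbf{g}\equiv \mathbf{e}_1$ on $\mathrm{supp}\,\mathbf{h}$, so that there $u^{\omega,n}\cd\na$ acts to leading order as $\omega n^{-1}\pa_{x_1}$. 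Differentiating the wave packet in $t$ and transporting it by the drift produces the two terms $\pm\omega n^{-s}\sin(nx_1-\omega t)\mathbf h$, which cancel; this is precisely why the phase is $nx_1-\omega t$ with drift speed $\omega n^{-1}$.

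The scalings are dictated by the two requirements of the theorem. Since $s$ derivatives falling on $\cos(nx_1-\omega t)$ cost a factor $n^s$, the choice $n^{-s}$ for the amplitude gives $\|u^{\omega,n}(t)\|_{H^s}\lesssim 1$ uniformly in $t$ and $n$, while the $O(n^{-1})$ drift also stays bounded, so the data lie in a fixed bounded subset of $H^s$. At $t=0$ the two wave packets coincide, so $\|u^{1,n}_0-u^{-1,n}_0\|_{H^s}=2n^{-1}\|\mathbf g\|_{H^s}\to 0$. For the separation, the identity $\cos(nx_1-t)-\cos(nx_1+t)=2\sin(nx_1)\sin t$ shows that at time $t$ the high-frequency parts differ by $2n^{-s}\sin t\,\sin(nx_1)\mathbf h$, whose $H^s$ norm is $\gtrsim|\sin t|$ (the $\pm$ choice of $\omega$, rather than $\{0,1\}$, is exactly what upgrades the lower bound from $|\sin(t/2)|$ to $|\sin t|$).

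The analytic core is to replace the approximate solutions by the genuine ones $\mathbf u^{\omega,n}$ emanating from the same data. Because the data are uniformly bounded in $H^s$ with $s>1+\frac d2$, the local well-posedness of \cite{Y.Y} furnishes a common existence time $T>0$ on which all $\mathbf u^{\omega,n}$ live with uniformly bounded $H^s$ norms. Writing the residual $F^{\omega,n}:=\pa_t u^{\omega,n}+u^{\omega,n}\cd\na u^{\omega,n}-P(u^{\omega,n},u^{\omega,n})$ and $w=\mathbf u^{\omega,n}-u^{\omega,n}$, one finds $\pa_t w+\mathbf u^{\omega,n}\cd\na w=-w\cd\na u^{\omega,n}+[P(\mathbf u^{\omega,n},\mathbf u^{\omega,n})-P(u^{\omega,n},u^{\omega,n})]-F^{\omega,n}$ with $w(0)=0$. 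Running an $H^s$ energy estimate, controlling the transport term by a Kato--Ponce commutator estimate (which closes without loss of derivatives precisely because $H^s\hookrightarrow W^{1,\infty}$ for $s>1+\frac d2$) and the bilinear $P$-difference by the algebra property of $H^s$, Gronwall gives $\sup_{[0,T]}\|w\|_{H^s}\lesssim T\sup_{[0,T]}\|F^{\omega,n}\|_{H^s}$. Combining with the previous paragraph then yields $\liminf_n\|\mathbf u^{1,n}(t)-\mathbf u^{-1,n}(t)\|_{H^s}\gtrsim|\sin t|$, which is the claim.

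The main obstacle is the error estimate $\|F^{\omega,n}\|_{H^s}\to 0$, and this is where the higher-dimensional vector structure forces more work than in the scalar Camassa--Holm case. One must expand $Q$ and $R$ on $u^{\omega,n}=u^{\omega,n}_{\mathrm{low}}+u^{\omega,n}_{\mathrm{hi}}$, track every low--low, low--high and high--high interaction through the nonlocal operators $(1-\De)^{-1}\D$ and $(1-\De)^{-1}$, and verify two things: that the leading transport term cancels as arranged above, and that the worst surviving contribution, from the purely high-frequency interaction $P(u^{\omega,n}_{\mathrm{hi}},u^{\omega,n}_{\mathrm{hi}})$, is genuinely lower order. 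A heuristic count is reassuring: $Q(u_{\mathrm{hi}},u_{\mathrm{hi}})\sim(n^{1-s})^2$, the smoothing $(1-\De)^{-1}\D$ contributes a net $n^{-1}$ at frequency $n$, so $P(u_{\mathrm{hi}},u_{\mathrm{hi}})\sim n^{1-2s}$ and hence $\lesssim n^{1-s}\to 0$ in $H^s$ since $s>1$; the transport remainder and the terms generated by the spatial variation of $\mathbf g,\mathbf h$ are of comparable or smaller order. Carrying out this bookkeeping carefully, with the matrix transpose and trace terms $(\na u)^T$ and $\mathbf{I}(\na u:\na v)$ included, is the technical heart of the argument.
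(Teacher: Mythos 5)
Your overall architecture matches the paper's: two families of approximate solutions indexed by $\omega=\pm1$, a low-frequency drift of size $n^{-1}$ plus a high-frequency wave packet with phase $nx_1-\omega t$ normalized to size one in $H^s$, cancellation of the leading transport term, a residual estimate, comparison with the genuine solutions from the same data, and the identity $\cos(nx_1-t)-\cos(nx_1+t)=2\sin(nx_1)\sin t$ for the lower bound. Your design choices differ in two harmless ways: you take the low-frequency part to be the static profile $\omega n^{-1}\mathbf g$ (so a low--low residual of size $n^{-2}$ appears), whereas the paper lets $u^{l,\omega,n}$ solve the Euler--Poincar\'e system exactly (no low--low residual, but one must control $\|u^{l,\omega,n}(t)-u^{l,\omega,n}(0)\|_{H^s}\lesssim n^{-2+\delta}$ to get the cancellation in the transport term); and you use a fixed-scale envelope $\mathbf h$ rather than the spreading envelope $\phi(x_1/n^\delta)$, which only changes the bookkeeping.

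There is, however, a genuine gap at the analytic core. You claim that an $H^s$ energy estimate for $w=\mathbf u^{\omega,n}-u^{\omega,n}$ closes via Kato--Ponce and the algebra property, yielding $\sup_{[0,T]}\|w\|_{H^s}\lesssim T\sup_{[0,T]}\|F^{\omega,n}\|_{H^s}$ with a constant independent of $n$. It does not. The commutator estimate handles the transport term $\mathbf u^{\omega,n}\cdot\nabla w$, but the equation for $w$ also contains $w\cdot\nabla u^{\omega,n}$, and any $H^s$ bound of this term forces $s$ derivatives onto $\nabla u^{\omega,n}$: the tame estimate gives $\|w\cdot\nabla u^{\omega,n}\|_{H^s}\lesssim \|w\|_{H^s}\|\nabla u^{\omega,n}\|_{L^\infty}+\|w\|_{L^\infty}\|u^{\omega,n}\|_{H^{s+1}}$, and $\|u^{\omega,n}\|_{H^{s+1}}\sim n$ for your wave packet. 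So the Gronwall constant grows like $n$, and the conclusion fails unless you already know $\|w\|_{L^\infty}=o(n^{-1})$, which is circular. This is precisely why the paper (following Himonas--Kenig and Himonas--Misio{\l}ek) runs the difference estimate in the \emph{weaker} norm $H^{s-1}$, where Lemma \ref{le1} closes the Gronwall argument using only $\|u^{\omega,n}\|_{H^s}\lesssim1$ and yields $\|w\|_{L^\infty_T(H^{s-1})}\lesssim n^{-2+\delta}$; it then combines this with the trivial bound $\|w\|_{L^\infty_T(H^{s+1})}\lesssim n$ and interpolates, $\|w\|_{H^s}\le\|w\|_{H^{s-1}}^{1/2}\|w\|_{H^{s+1}}^{1/2}\lesssim n^{-\frac12+\frac12\delta}\to0$. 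Your residual estimates would then also need to be carried out at the level $H^{s-1}$ with decay $o(n^{-1})$ (which your scalings do deliver), rather than merely $\|F^{\omega,n}\|_{H^s}\to0$. Without this two-norm interpolation step your argument does not go through as written.
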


Our paper is organized as follows. In Section 2, we give some preliminaries which will be used in the sequel. In Section 3, we give the proof of our main theorem.\\

\noindent\textbf{Notations.} Given a Banach space $X$, we denote its norm by $\|\cdot\|_{X}$. The symbol $A\lesssim B$ means that there is a uniform positive constant $c$ independent of $A$ and $B$ such that $A\leq cB$. Here
\bbal
&(\na u)_{i,j}=\pa_{x_i}u^j, \quad (u\cd \na v)_i=\sum^d_{k=1}u_k\pa_{x_k}u_j, \quad (\na u\na v)_{ij}=\sum^d_{k=1}\pa_{x_i}u_k\pa_{x_k}v_j,
\\&\na u:\na v=\sum^d_{i,j=1}\pa_{x_i}u_j\pa_{x_i}v_j.
\end{align*}

\section{Littlewood-Paley analysis}

In this section, we will recall some facts about the Littlewood-Paley decomposition, the nonhomogeneous Besov spaces and their some useful properties. For more details, the readers can refer to \cite{B.C.D}.

There exists a couple of smooth functions $(\chi,\varphi)$ valued in $[0,1]$, such that $\chi$ is supported in the ball $\mathcal{B}\triangleq \{\xi\in\mathbb{R}^d:|\xi|\leq \frac 4 3\}$, and $\varphi$ is supported in the ring $\mathcal{C}\triangleq \{\xi\in\mathbb{R}^d:\frac 3 4\leq|\xi|\leq \frac 8 3\}$. Moreover,
$$\forall\,\ \xi\in\mathbb{R}^d,\,\ \chi(\xi)+{\sum\limits_{j\geq0}\varphi(2^{-j}\xi)}=1,$$
$$\forall\,\ 0\neq\xi\in\mathbb{R}^d,\,\ {\sum\limits_{j\in \Z}\varphi(2^{-j}\xi)}=1,$$
$$|j-j'|\geq 2\Rightarrow\textrm{Supp}\,\ \varphi(2^{-j}\cdot)\cap \textrm{Supp}\,\ \varphi(2^{-j'}\cdot)=\emptyset,$$
$$j\geq 1\Rightarrow\textrm{Supp}\,\ \chi(\cdot)\cap \textrm{Supp}\,\ \varphi(2^{-j}\cdot)=\emptyset,$$
Then, we can define the nonhomogeneous dyadic blocks $\Delta_j$ and nonhomogeneous low frequency cut-off operator $S_j$ as follows:
$$\Delta_j{u}= 0,\,\ if\,\ j\leq -2,\quad
\Delta_{-1}{u}= \chi(D)u=\mathcal{F}^{-1}(\chi \mathcal{F}u),$$
$$\Delta_j{u}= \varphi(2^{-j}D)u=\mathcal{F}^{-1}(\varphi(2^{-j}\cdot)\mathcal{F}u),\,\ if \,\ j\geq 0,$$
$$S_j{u}= {\sum\limits_{j'=-\infty}^{j-1}}\Delta_{j'}{u}.$$

\begin{defi}(\cite{B.C.D})\label{de2.3}
Let $s\in\mathbb{R}$ and $1\leq p,r\leq\infty$. The nonhomogeneous Besov space $B^s_{p,r}$ consists of all tempered distribution $u$ such that
\begin{align*}
||u||_{B^s_{p,r}(\R^d)}\triangleq \Big|\Big|(2^{js}||\Delta_j{u}||_{L^p(\R^d)})_{j\in \Z}\Big|\Big|_{\ell^r(\Z)}<\infty.
\end{align*}
\end{defi}
\begin{rema}(\cite{B.C.D})
When $p=r=2$, we have $B^s_{2,2}(\R^d)=H^s(\R^d)$. Here, $H^s(\R^d)$ is the standard Sobolev space with the norm
\bbal
||u||^2_{H^s(\R^d)}:=\int_{\R^d}(1+|\xi^2|)^s|\hat{u}(\xi)|^2\dd \xi.
\end{align*}
For any $u\in B^s_{2,2}(\R^d)$, there holds
\bbal
c_0||u||_{H^s(\R^d)}\leq ||u||_{B^s_{2,2}(\R^d)}\leq C_0||u||_{H^s(\R^d)}.
\end{align*}
If $s>\frac d2$, we also have $||u||_{L^\infty(\R^d)}\lesssim ||u||_{B^s_{2,2}(\R^d)}$.
\end{rema}

Then, we have the following product laws.
\begin{lemm}(\cite{B.C.D})\label{le1}
Let $d\geq 2$ and $s>\frac d2$. Then there exists a constant $C=C(d,s)$ such that
\[||uv||_{B^{s-1}_{2,2}(\R^d)}\leq C\big(||u||_{L^\infty(\R^d)}||v||_{B^{s-1}_{2,2}(\R^d)}+||v||_{L^\infty(\R^d)}||u||_{B^{s-1}_{2,2}(\R^d)}\big),\]
\[||uv||_{B^s_{2,2}(\R^d)}\leq C||u||_{B^s_{2,2}(\R^d)}||v||_{B^s_{2,2}(\R^d)}, \quad ||uv||_{B^{s-1}_{2,2}(\R^d)}\leq C||u||_{B^{s-1}_{2,2}(\R^d)}||v||_{B^s_{2,2}(\R^d)}.\]
\end{lemm}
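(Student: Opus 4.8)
The plan is to establish all three inequalities through Bony's paraproduct decomposition
\[uv = T_uv + T_vu + R(u,v),\]
where $T_uv=\sum_j S_{j-1}u\,\De_j v$ denotes the paraproduct and $R(u,v)=\sum_{|j-j'|\le 1}\De_j u\,\De_{j'}v$ the remainder. The two structural facts I would invoke (both classical, and available in \cite{B.C.D}) are the continuity of the paraproduct, $\|T_uv\|_{B^{\sigma}_{2,2}}\lesssim \|u\|_{L^\infty}\|v\|_{B^\sigma_{2,2}}$ for every $\sigma\in\R$, together with the refined version $\|T_uv\|_{B^{\sigma+t}_{2,2}}\lesssim \|u\|_{B^t_{\infty,\infty}}\|v\|_{B^\sigma_{2,2}}$ for $t<0$; and the continuity of the remainder, $\|R(u,v)\|_{B^{\sigma_1+\sigma_2}_{2,2}}\lesssim \|u\|_{B^{\sigma_1}_{2,2}}\|v\|_{B^{\sigma_2}_{\infty,\infty}}$ whenever $\sigma_1+\sigma_2>0$. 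All the real work is in checking that, under $d\ge 2$ and $s>\frac d2$, the regularity indices produced by each piece land in the target space.

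For the first inequality I would decompose $uv$ as above and bound the three pieces in $B^{s-1}_{2,2}$. The two paraproducts give $\|T_uv\|_{B^{s-1}_{2,2}}\lesssim \|u\|_{L^\infty}\|v\|_{B^{s-1}_{2,2}}$ and, symmetrically, $\|T_vu\|_{B^{s-1}_{2,2}}\lesssim \|v\|_{L^\infty}\|u\|_{B^{s-1}_{2,2}}$. For the remainder I would use $\|u\|_{B^0_{\infty,\infty}}\lesssim \|u\|_{L^\infty}$ and apply the remainder estimate with $\sigma_1=s-1$, $\sigma_2=0$: since $d\ge 2$ and $s>\frac d2$ force $s-1>0$, the sum $\sigma_1+\sigma_2=s-1>0$ is admissible and $\|R(u,v)\|_{B^{s-1}_{2,2}}\lesssim \|u\|_{B^{s-1}_{2,2}}\|v\|_{L^\infty}$. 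Adding the three contributions yields the first inequality. The second inequality then follows at once: running the identical argument at regularity $s$ (again $s>0$) gives $\|uv\|_{B^s_{2,2}}\lesssim \|u\|_{L^\infty}\|v\|_{B^s_{2,2}}+\|v\|_{L^\infty}\|u\|_{B^s_{2,2}}$, after which the embedding $B^s_{2,2}\hookrightarrow L^\infty$ for $s>\frac d2$ (recorded in the remark above) absorbs the $L^\infty$ norms into $B^s_{2,2}$ norms.

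The third, mixed-regularity inequality is the delicate one, and I expect it to be the main obstacle, precisely because the low-regularity factor $u\in B^{s-1}_{2,2}$ need not lie in $L^\infty$ when $s-1\le \frac d2$. I would again split $uv=T_vu+T_uv+R(u,v)$. The term $T_vu$ is harmless: $v\in B^s_{2,2}\hookrightarrow L^\infty$, so $\|T_vu\|_{B^{s-1}_{2,2}}\lesssim \|v\|_{L^\infty}\|u\|_{B^{s-1}_{2,2}}\lesssim \|v\|_{B^s_{2,2}}\|u\|_{B^{s-1}_{2,2}}$. For $T_uv$ I cannot use an $L^\infty$ bound on $u$; instead I would apply the refined paraproduct estimate with $t=-1$ and $\sigma=s$, namely $\|T_uv\|_{B^{s-1}_{2,2}}\lesssim \|u\|_{B^{-1}_{\infty,\infty}}\|v\|_{B^s_{2,2}}$, and then invoke $B^{s-1}_{2,2}\hookrightarrow B^{s-1-\frac d2}_{\infty,\infty}\hookrightarrow B^{-1}_{\infty,\infty}$, valid exactly because $s>\frac d2$. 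Finally, for $R(u,v)$ I would argue directly on the dyadic blocks: writing $\De_j R(u,v)=\sum_{k\ge j-N}\De_j(\De_k u\,\widetilde{\De}_k v)$ and using Bernstein's inequality in the form $\|\De_k u\|_{L^\infty}\lesssim 2^{k(\frac d2-s+1)}c_k\|u\|_{B^{s-1}_{2,2}}$ together with $\|\widetilde{\De}_k v\|_{L^2}\lesssim 2^{-ks}d_k\|v\|_{B^s_{2,2}}$, where $(c_k),(d_k)\in\ell^2$. This yields $\|\De_k u\,\widetilde{\De}_k v\|_{L^2}\lesssim 2^{k(\frac d2+1-2s)}c_kd_k\|u\|_{B^{s-1}_{2,2}}\|v\|_{B^s_{2,2}}$, and since the exponent $\frac d2+1-2s$ is negative under $s>\frac d2\ge 1$, the geometric decay in $k$ lets me sum the tail $k\ge j-N$ and conclude $\|R(u,v)\|_{B^{s-1}_{2,2}}\lesssim \|u\|_{B^{s-1}_{2,2}}\|v\|_{B^s_{2,2}}$. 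Collecting the three bounds gives the third inequality and completes the proof.
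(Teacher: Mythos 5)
Your proposal is correct: all three estimates follow as you claim from the continuity properties of Bony's paraproduct and remainder, and your index bookkeeping is sound --- in particular you correctly identify that $d\ge 2$ and $s>\frac d2$ force $s-1>0$ (needed for the remainder in the first two estimates), and that the third, mixed-regularity estimate requires the refined paraproduct bound $\|T_uv\|_{B^{s-1}_{2,2}}\lesssim \|u\|_{B^{-1}_{\infty,\infty}}\|v\|_{B^{s}_{2,2}}$ together with the embedding $B^{s-1}_{2,2}\hookrightarrow B^{-1}_{\infty,\infty}$, which is exactly where $s>\frac d2$ enters. Note, however, that the paper itself gives no proof of this lemma: it is quoted directly from the reference of Bahouri--Chemin--Danchin, and your argument is essentially the standard one found there (your hands-on dyadic treatment of the remainder in the third inequality could equally well be replaced by one more application of the quoted remainder estimate, with $u\in B^{s-1}_{2,2}$ and $v\in B^{s}_{2,2}\hookrightarrow B^{s-\frac d2}_{\infty,\infty}$, since $(s-1)+(s-\frac d2)>s-1>0$).
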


\begin{lemm}(\cite{B.C.D})\label{le2}
Let $d\geq 2$ and $s>\frac d2-1$. Assume that $f_0\in B^{s}_{2,2}$, $F\in L^1_T(B^{s}_{2,2}(\R^d))$ and $\nabla v\in L^1_T(B^{s-1}_{2,2}(\R^d)\cap B^{\frac d2}_{2,2}(\R^d)\cap L^\infty(\R^d))$. If $f\in \mathcal{C}([0,T];B^{s}_{2,2}(\R^d))$ solves the following linear transport equation:
\begin{equation*}
\begin{cases}
\pa_tf+v\cdot \nabla f=F,\\
f(0,x)=f_0,
\end{cases}
\end{equation*}
then there exists a positive constant $C=C(s)$ such that
$$||f||_{B^{s}_{2,2}(\R^d)}\leq ||f_0||_{B^{s}_{2,2}(\R^d)}+C\int^t_0V'(\tau)||f(\tau)||_{B^{s}_{2,2}(\R^d)}\mathrm{d}\tau+\int^t_0||F(\tau)||_{B^{s}_{2,2}(\R^d)}\mathrm{d}\tau,$$
or
$$||f||_{B^{s}_{2,2}(\R^d)}\leq e^{CV(t)}\Big(||f_0||_{B^{s}_{2,2}(\R^d)}+\int^t_0e^{-CV(\tau)}||F(\tau)||_{B^{s}_{2,2}(\R^d)}\mathrm{d}\tau\Big),$$
where
\begin{equation*}
V(t)=
\begin{cases}
\int^t_0||\nabla v||_{B^{s-1}_{2,2}(\R^d)} \ \mathrm{d}\tau, \quad s>\frac d2,\\
\int^t_0||\nabla v||_{B^{\frac d2}_{2,2}(\R^d)\cap L^\infty(\R^d)} \ \mathrm{d}\tau, \quad s\leq\frac d2.
\end{cases}
\end{equation*}
\end{lemm}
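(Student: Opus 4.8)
The plan is to reduce the estimate to a frequency-localized energy inequality together with a commutator bound, both standard in the Littlewood-Paley framework (see \cite{B.C.D}). First I would apply the dyadic block $\Delta_j$ to the transport equation to obtain, for each $j$,
\[
\pa_t \Delta_j f + v\cd\na \Delta_j f = \Delta_j F + R_j, \qquad R_j := v\cd\na \Delta_j f - \Delta_j(v\cd\na f).
\]
The commutator $R_j = [v\cd\na, \Delta_j]f$ is precisely the error produced by commuting the transport operator with the frequency projection, and controlling it uniformly in $j$ is the heart of the argument.

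Next I would run an $L^2$ energy estimate on this localized equation. Taking the $L^2(\R^d)$ inner product with $\Delta_j f$ and integrating by parts in the transport term gives the identity $\int_{\R^d} (v\cd\na \Delta_j f)\,\Delta_j f\,\dd x = -\fr12\int_{\R^d}(\D v)\,|\Delta_j f|^2\,\dd x$, so that after dividing by $\|\Delta_j f\|_{L^2}$,
\[
\fr{\dd}{\dd t}\|\Delta_j f\|_{L^2} \le \fr12\|\D v\|_{L^\infty}\|\Delta_j f\|_{L^2} + \|\Delta_j F\|_{L^2} + \|R_j\|_{L^2}.
\]
Integrating in time, multiplying by $2^{js}$, and noting that $\|\D v\|_{L^\infty}\lesssim\|\na v\|_{L^\infty}$ is harmless (it is absorbed into $V'(\tau)$), this sets up the desired bound once the commutator sum is controlled.

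The main obstacle is the commutator term: I must show that the weighted sequence $(2^{js}\|R_j\|_{L^2})_{j}$ lies in $\ell^2(\Z)$ with norm controlled by $\|\na v\|_{(\cdot)}\,\|f\|_{B^s_{2,2}}$. This is where Bony's paraproduct decomposition enters: writing $v\cd\na f$ componentwise as $T_{v_k}\pa_{x_k} f + T_{\pa_{x_k} f} v_k + R(v_k,\pa_{x_k} f)$ and commuting $\Delta_j$ against each piece, one estimates the paraproduct and remainder contributions separately. The telescoping structure of $S_{j'-1}v$ together with the spectral localization of $\Delta_{j'}f$ produces the gain of one derivative off $\na v$ and off $f$; summing the resulting discrete convolution inequalities in $\ell^2$ yields $\big\|(2^{js}\|R_j\|_{L^2})_j\big\|_{\ell^2}\lesssim \|\na v\|_{B^{s-1}_{2,2}}\|f\|_{B^s_{2,2}}$ in the regime $s>\fr d2$. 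In the low-regularity regime $\fr d2-1<s\le\fr d2$, the paraproduct $T_{\pa_{x_k}f}v_k$ can no longer be closed by $B^{s-1}_{2,2}$ alone, and one is forced to measure $\na v$ in $B^{\fr d2}_{2,2}\cap L^\infty$, the $L^\infty$ norm being exactly what handles this borderline contribution; this is the source of the two cases in the definition of $V$.

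Finally I would assemble the pieces. Multiplying the integrated energy inequality by $2^{js}$, taking the $\ell^2(\Z)$ norm, and applying the Minkowski integral inequality together with the commutator estimate gives
\[
\|f(t)\|_{B^s_{2,2}} \le \|f_0\|_{B^s_{2,2}} + C\int_0^t V'(\tau)\|f(\tau)\|_{B^s_{2,2}}\,\dd\tau + \int_0^t \|F(\tau)\|_{B^s_{2,2}}\,\dd\tau,
\]
which is the first asserted bound, with $V'(\tau)=\|\na v(\tau)\|_{B^{s-1}_{2,2}}$ for $s>\fr d2$ and $V'(\tau)=\|\na v(\tau)\|_{B^{\fr d2}_{2,2}\cap L^\infty}$ for $s\le\fr d2$. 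The second, exponential form then follows from the integral form of Gr\"onwall's lemma applied to the quantity $e^{-CV(t)}\|f(t)\|_{B^s_{2,2}}$, whose time derivative is bounded by $e^{-CV(t)}\|F(t)\|_{B^s_{2,2}}$.
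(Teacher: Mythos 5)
The paper never proves this lemma --- it is quoted from \cite{B.C.D} (the a priori estimates for the linear transport equation in Chapter 3 of that book, which rest on the commutator estimates of Chapter 2) --- so the only benchmark for your attempt is the standard proof in that reference. Your outline is precisely that proof: localization by $\Delta_j$, the energy identity $\int_{\R^d}(v\cd\na\Delta_jf)\,\Delta_jf\,\dd x=-\fr12\int_{\R^d}(\D v)|\Delta_jf|^2\,\dd x$, the commutator $R_j=[v\cd\na,\Delta_j]f$ handled through Bony's decomposition, $\ell^2$ summation, and Gr\"onwall. Structurally this is the right (and the cited) argument.

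There is, however, a genuine gap at the case split, exactly where your sketch is least detailed. You claim that for $s>\fr d2$ one has $\big\|(2^{js}\|R_j\|_{L^2})_j\big\|_{\ell^2}\lesssim\|\na v\|_{B^{s-1}_{2,2}}\|f\|_{B^s_{2,2}}$. This is true only for $s>1+\fr d2$. The principal commutator term $[T_{v_k},\Delta_j]\pa_{x_k}f$ is controlled, via the first-order (mean value) commutator lemma, by $\|\na S_{j'-1}v\|_{L^\infty}\|\Delta_{j'}f\|_{L^2}$ with $j'\sim j$; to convert $\|\na v\|_{L^\infty}$ into $\|\na v\|_{B^{s-1}_{2,2}}$ you need the embedding $B^{s-1}_{2,2}\hookrightarrow L^\infty$, i.e. $s-1>\fr d2$. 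In the band $\fr d2<s\le 1+\fr d2$ this fails: one only gets $\|\na S_{j'-1}v\|_{L^\infty}\lesssim 2^{j'(1+\fr d2-s)}\|\na v\|_{B^{s-1}_{2,2}}$, and the growing factor destroys the summation. The same defect occurs in your treatment of $T_{\pa_{x_k}f}v_k$ (which needs $\|\na f\|_{L^\infty}\lesssim\|f\|_{B^s_{2,2}}$, again requiring $s>1+\fr d2$) and in your remark that $\|\D v\|_{L^\infty}$ is ``absorbed into $V'$'' (false in that band if $V'=\|\na v\|_{B^{s-1}_{2,2}}$). To be fair, the statement you were handed carries the same flaw: in \cite{B.C.D} the dividing line between the two forms of $V$ is $\sigma=1+\fr dp$ (here $1+\fr d2$), not $\fr d2$, and the paper's transcription appears to have dropped the ``$1+$''. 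The correct repair is to prove the first alternative only for $s>1+\fr d2$ and to use $V'=\|\na v\|_{B^{d/2}_{2,2}\cap L^\infty}$ on the whole range $\fr d2-1<s<1+\fr d2$; this is what the reference establishes, and it suffices for every application made in the paper, since there the velocity always satisfies $\na v\in B^{s-1}_{2,2}$ with $s>1+\fr d2$, whence $\|\na v\|_{B^{d/2}_{2,2}\cap L^\infty}\lesssim\|\na v\|_{B^{s-1}_{2,2}}$.
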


\begin{lemm}(\cite{H-H})\label{le3}
Let $\phi\in S(\mathbb{R})$, $\delta>0$ and $s\geq 0$ . Then we have for all $\alpha\in \mathbb{R}$,
\bbal
&\lim\limits_{n\rightarrow \infty}n^{-\frac12\delta-s}||\phi(\frac{x}{n^{\delta}})\cos(nx-\alpha)||_{H^s(\R)}=\frac{1}{\sqrt{2}}||\phi||_{L^2(\R)},
\\&\lim\limits_{n\rightarrow \infty}n^{-\frac12\delta-s}||\phi(\frac{x}{n^{\delta}})\sin(nx-\alpha)||_{H^s(\R)}=\frac{1}{\sqrt{2}}||\phi||_{L^2(\R)}.
\end{align*}
\end{lemm}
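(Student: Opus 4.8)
The plan is to adapt the approximate-solution method of \cite{H-H,H-M} to the nonlocal, multidimensional system \eqref{E-P1}. I would build two families of approximate solutions sharing one common high-frequency oscillation but carried by opposite low-frequency transport velocities, so that their data agree to leading order at $t=0$ while at positive times the phases drift apart at rate $|\sin t|$. Fixing the oscillation in the variable $x_1$ and letting the oscillatory velocity point in the transverse direction $\mathbf e_2$ (available since $d\geq2$), with a product Schwartz bump $\Phi(x)=\phi(x_1)\prod_{j\geq2}\psi(x_j)$, $\phi,\psi\in S(\R)$, and a small $\delta>0$, I would set
\bbal
u^{\mathrm{app},\pm}(t,x)=\pm\fr1n\,\Phi\Big(\fr{x}{n^\delta}\Big)\mathbf e_1+a_n\,\Phi\Big(\fr{x}{n^\delta}\Big)\cos(nx_1\mp t)\,\mathbf e_2,
\end{align*}
choosing $a_n$ (through Lemma \ref{le3} applied in $x_1$, combined with the $L^2$-scaling in the transverse variables) so that $\|u^{\mathrm{app},\pm}(t)\|_{H^s}\lesssim1$ uniformly in $n$ and $t\in[0,T]$. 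Pointing the oscillatory part along $\mathbf e_2$ is deliberate: its divergence $\pa_{x_2}(\cdots\cos(nx_1\mp t))$ loses the amplifying factor $n$, which keeps the quadratic nonlinearities $Q$ and $R$ manageable once the smoothing operators $(1-\De)^{-1}\D$ and $(1-\De)^{-1}$ act on them.

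The low-frequency components $\pm n^{-1}\Phi(x/n^\delta)\mathbf e_1$ play the role of transport velocities. Taking the low-frequency profile identically equal to $1$ on the support of the oscillatory bump, one has $u_1\equiv\pm n^{-1}$ there, so that the leading transport contribution $u_1\pa_{x_1}$ applied to $\cos(nx_1\mp t)$ cancels $\pa_t$ of the phase \emph{exactly}. I would then substitute the ansatz into \eqref{E-P1} and estimate the residual
\bbal
F^\pm:=\pa_tu^{\mathrm{app},\pm}+u^{\mathrm{app},\pm}\cd\na u^{\mathrm{app},\pm}-P(u^{\mathrm{app},\pm},u^{\mathrm{app},\pm}),
\end{align*}
showing $\|F^\pm(t)\|_{H^{s-1}}\lesssim n^{-1-\delta}$ (or better). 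Here one uses that the quadratic terms output frequencies near $0$, $n$ and $2n$, on each of which $(1-\De)^{-1}\D$ and $(1-\De)^{-1}$ gain powers of $n$, and that every $x$-derivative falling on the bump contributes a factor $n^{-\delta}$; the product laws of Lemma \ref{le1} organize these bounds.

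With the residual controlled I would invoke the local well-posedness of \cite{Y.Y} to produce genuine solutions $u^n,v^n$ of \eqref{E-P1} with data $u^{\mathrm{app},+}(0)$ and $u^{\mathrm{app},-}(0)$, uniformly bounded in $\mathcal C([0,T];H^s)$ on a time interval independent of $n$. Writing $w=u^n-u^{\mathrm{app},+}$, which solves $\pa_tw+u^n\cd\na w=G$ with $G=-w\cd\na u^{\mathrm{app},+}+\big(P(u^n,u^n)-P(u^{\mathrm{app},+},u^{\mathrm{app},+})\big)-F^+$, I would apply the transport estimate Lemma \ref{le2} \emph{at regularity $s-1$}: because $\|u^{\mathrm{app},+}\|_{H^s}\lesssim1$ controls both $\|\na u^{\mathrm{app},+}\|_{H^{s-1}}$ and $\|\na u^{\mathrm{app},+}\|_{L^\infty}$ with no factor $n$ (using $s>1+\tfrac d2$), Gronwall yields $\|w\|_{H^{s-1}}\lesssim n^{-1-\delta}$. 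To upgrade to $H^s$ I would bound the higher norm $\|u^n\|_{H^{s+1}}+\|u^{\mathrm{app},+}\|_{H^{s+1}}\lesssim n$ by persistence of regularity (the $H^s$-bound controlling $\|\na u^n\|_{L^\infty}$) and interpolate, $\|w\|_{H^s}\lesssim\|w\|_{H^{s-1}}^{1/2}\|w\|_{H^{s+1}}^{1/2}\to0$; the same applies to $v^n-u^{\mathrm{app},-}$. Finally, from
\bbal
\|u^n(t)-v^n(t)\|_{H^s}\ge\|u^{\mathrm{app},+}(t)-u^{\mathrm{app},-}(t)\|_{H^s}-\|u^n-u^{\mathrm{app},+}\|_{H^s}-\|v^n-u^{\mathrm{app},-}\|_{H^s}
\end{align*}
together with the identity $u^{\mathrm{app},+}-u^{\mathrm{app},-}=2n^{-1}\Phi(x/n^\delta)\mathbf e_1+2a_n\Phi(x/n^\delta)\sin(nx_1)\sin t\,\mathbf e_2$, whose $H^s$ norm tends to a positive multiple of $|\sin t|$ by Lemma \ref{le3} while the two correction terms vanish, I obtain $\liminf_n\|u^n(t)-v^n(t)\|_{H^s}\gtrsim|\sin t|$. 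The data convergence $\|u^n_0-v^n_0\|_{H^s}\to0$ is immediate, since the oscillatory parts coincide at $t=0$ and $\|2n^{-1}\Phi(x/n^\delta)\mathbf e_1\|_{H^s}\lesssim n^{-1+d\delta/2}\to0$ for $\delta$ small.

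The main obstacle I anticipate is the residual estimate for $F^\pm$: the nonlocal quadratic right-hand side $P$ of the multidimensional system is far more intricate than in the one-dimensional Camassa--Holm case, and one must separately track the self-interaction of the oscillatory part, the transverse derivatives of the bump, and each term inside $Q$ and $R$, checking that after $(1-\De)^{-1}\D$ and $(1-\De)^{-1}$ every contribution is genuinely lower order in $n$. The two devices that make the bookkeeping close are the orthogonality of the oscillation and velocity directions — so the dangerous factor $n$ enters only through $\pa_{x_1}$ on the phase and is removed by the transport balance — and the choice of a low-frequency profile that is flat on the support of the bump, which makes that cancellation exact and thereby gives the residual decay faster than $n^{-1}$ required to close the interpolation; pinning down this rate also fixes the admissible range of $\delta$.
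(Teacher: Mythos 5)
You have not proved the statement you were asked to prove. Lemma \ref{le3} is a self-contained, one-dimensional Fourier-analysis fact about the functions $f_n(x)=\phi(x/n^{\delta})\cos(nx-\alpha)$: their $H^s(\R)$ norms grow exactly like $\frac{1}{\sqrt2}\|\phi\|_{L^2}\,n^{s+\frac12\delta}$. It makes no reference whatsoever to the Euler--Poincar\'e system. What you have written instead is a proof strategy for the paper's main result, Theorem \ref{th2}: an approximate-solution ansatz, residual estimates for $Q$ and $R$, local well-posedness from \cite{Y.Y}, transport estimates, and interpolation — i.e., essentially the content of the paper's Section 3. Worse, your argument invokes Lemma \ref{le3} itself at two places (to normalize the amplitude $a_n$ so that $\|u^{\mathrm{app},\pm}\|_{H^s}\lesssim 1$, and to show that $\|u^{\mathrm{app},+}(t)-u^{\mathrm{app},-}(t)\|_{H^s}$ tends to a positive multiple of $|\sin t|$). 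Read as a proof of Lemma \ref{le3}, this is circular; read as written, it addresses a different statement. (For the record, the paper does not prove the lemma either: it quotes it from \cite{H-H}.)

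The proof that is actually required is short and purely computational. Writing $\cos(nx-\alpha)=\frac12\bigl(e^{i(nx-\alpha)}+e^{-i(nx-\alpha)}\bigr)$ and taking Fourier transforms,
\[
\widehat{f_n}(\xi)=\tfrac12\,n^{\delta}\Bigl(e^{-i\alpha}\,\widehat{\phi}\bigl(n^{\delta}(\xi-n)\bigr)+e^{i\alpha}\,\widehat{\phi}\bigl(n^{\delta}(\xi+n)\bigr)\Bigr),
\]
so $|\widehat{f_n}|^{2}$ consists of two bumps of width $O(n^{-\delta})$ centered at $\xi=\pm n$, plus a cross term which is $O(n^{-N})$ for every $N$ because the two bumps are $2n$ apart and $\widehat{\phi}$ decays rapidly. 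For each bump, the substitution $\eta=n^{\delta}(\xi\mp n)$ gives
\[
n^{\delta}\int_{\R}\bigl(1+\xi^{2}\bigr)^{s}\bigl|\widehat{\phi}\bigl(n^{\delta}(\xi\mp n)\bigr)\bigr|^{2}\,\dd\xi
=\int_{\R}\bigl(1+(\pm n+n^{-\delta}\eta)^{2}\bigr)^{s}\,|\widehat{\phi}(\eta)|^{2}\,\dd\eta ,
\]
and since $\bigl(1+(\pm n+n^{-\delta}\eta)^{2}\bigr)^{s}\le 2^{s}n^{2s}(1+\eta^{2})^{s}$ for $n\ge1$, $s\ge0$, dominated convergence (with dominating function $2^{s}(1+\eta^{2})^{s}|\widehat{\phi}(\eta)|^{2}$, integrable as $\phi$ is Schwartz) shows that the right-hand side equals $n^{2s}\|\widehat{\phi}\|_{L^{2}(\R)}^{2}\,(1+o(1))$. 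Summing the two bumps, each carrying the prefactor $\frac14 n^{2\delta}$, and converting $\|\widehat{\phi}\|_{L^{2}}$ to $\|\phi\|_{L^{2}}$ by Plancherel, one gets $\|f_n\|_{H^{s}(\R)}^{2}=\frac12\,n^{2s+\delta}\,\|\phi\|_{L^{2}(\R)}^{2}\,(1+o(1))$, which is the first limit after taking square roots; the sine case follows by replacing $\alpha$ with $\alpha+\frac{\pi}{2}$. This is the argument you needed to supply; none of the PDE machinery in your proposal is relevant to it.
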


\section{Non-uniform continuous dependence}

In this section, we will give the proof of our main theorem. Firstly, motivated by \cite{H-H,H-M}, we can construct a sequence approximate solutions where the last $d-1$ component is 0. Lately, we consider the difference about approximate solution and actual solution and also show that this distance is decaying. Finally, by the precious steps, we can conclude that the the solution map is not uniformly continuous. In order to state our main result, we first recall the following local-in-time existence of strong solutions to \eqref{E-P} in \cite{Y.Y}:
\begin{lemm}(\cite{Y.Y})\label{le10}
For $s>1+\frac d2,\ d\geq 2$, and initial data $u_0\in B^s_{2,2}(\R^d)$, there exists a time $T=T(s,d,||u_0||_{B^s_{2,2}(\R^d)})>0$ such that the system \eqref{E-P} have a unique solution $u\in \mathcal{C}([0,T];B^s_{2,2}(\R^d))$. Moreover, for all $t\in[0,T]$, there holds
\[||u(t)||_{B^s_{2,2}(\R^d)}\leq \frac{||u_0||_{B^s_{2,2}(\R^d)}}{1-C_sT||u_0||_{B^s_{2,2}(\R^d)}}.\]
\end{lemm}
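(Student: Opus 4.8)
The plan is to establish Lemma \ref{le10} by the classical iteration scheme for transport-type quasilinear equations, where the heart of the matter is the single nonlinear estimate
$$||P(u,u)||_{B^s_{2,2}(\R^d)}\lesssim ||u||^2_{B^s_{2,2}(\R^d)},\qquad s>1+\fr d2.$$
To prove it, recall $P(u,u)=-(1-\De)^{-1}\D Q(u,u)-(1-\De)^{-1}R(u,u)$. Since $s-1>\fr d2$, the space $B^{s-1}_{2,2}$ is a Banach algebra, so the product law of Lemma \ref{le1} controls the quadratic-in-$\na u$ tensor $Q(u,u)$ and the term $R(u,u)=(\D u)u+u\cd\na u$ by $||Q(u,u)||_{B^{s-1}_{2,2}}+||R(u,u)||_{B^{s-1}_{2,2}}\lesssim ||u||^2_{B^s_{2,2}}$. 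The Fourier multiplier $(1-\De)^{-1}\D$ has order $-1$ and $(1-\De)^{-1}$ has order $-2$, so both map $B^{s-1}_{2,2}$ into $B^s_{2,2}$; this gives the displayed estimate and is precisely the place where the threshold $s>1+\fr d2$ enters.

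With this in hand I would read off the stated bound directly from the transport estimate of Lemma \ref{le2}, applied with $v=u$ and $F=P(u,u)$. Since $s>\fr d2$ we have $V'(\tau)=||\na u(\tau)||_{B^{s-1}_{2,2}}\lesssim ||u(\tau)||_{B^s_{2,2}}$, and combining this with the nonlinear estimate gives the integral inequality
$$||u(t)||_{B^s_{2,2}}\leq ||u_0||_{B^s_{2,2}}+C_s\int^t_0||u(\tau)||^2_{B^s_{2,2}}\,\dd\tau.$$
Setting $y(t)=||u(t)||_{B^s_{2,2}}$ and comparing $y$ with the solution of the Riccati equation $Y'=C_sY^2$, $Y(0)=y(0)$, namely $Y(t)=y(0)/(1-C_sy(0)t)$, yields $||u(t)||_{B^s_{2,2}}\leq ||u_0||_{B^s_{2,2}}/(1-C_sT||u_0||_{B^s_{2,2}})$ for $t\in[0,T]$ as soon as $T<(C_s||u_0||_{B^s_{2,2}})^{-1}$, which fixes the lifespan $T=T(s,d,||u_0||_{B^s_{2,2}})$.

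For existence I would run the iteration $\pa_tu^{n+1}+u^n\cd\na u^{n+1}=P(u^n,u^n)$ with $u^{n+1}(0)=u_0$, starting from $u^0=0$ and controlling each linear transport step by Lemma \ref{le2}. The computation just described, used inductively, produces a uniform bound $\sup_{[0,T]}||u^n||_{B^s_{2,2}}\leq M$ on a common interval $[0,T]$. To obtain convergence I would estimate the successive differences $w^n=u^{n+1}-u^n$ one derivative below, in $B^{s-1}_{2,2}$: each $w^n$ solves a transport equation whose source is a sum of bilinear differences, bounded via Lemma \ref{le1} by $M\,||w^{n-1}||_{B^{s-1}_{2,2}}$, so $\{u^n\}$ is Cauchy in $\mathcal{C}([0,T];B^{s-1}_{2,2})$. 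Interpolating this $B^{s-1}_{2,2}$ convergence against the uniform $B^s_{2,2}$ bound gives a limit $u$ solving \eqref{E-P1}, and the identical low-norm argument applied to the difference of two solutions yields uniqueness.

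The main obstacle I anticipate is upgrading the limit from $L^\infty([0,T];B^s_{2,2})\cap\mathcal{C}([0,T];B^{s-1}_{2,2})$ to genuine strong continuity $u\in\mathcal{C}([0,T];B^s_{2,2})$ at the top regularity, since the transport estimate only delivers an a priori bound on the top norm while Cauchyness is proved one derivative lower. Because $B^s_{2,2}(\R^d)=H^s(\R^d)$ is a Hilbert space, I would resolve this by first establishing weak continuity in $B^s_{2,2}$ and then the continuity of $t\mapsto ||u(t)||_{B^s_{2,2}}$ (running the transport estimate forward and backward in time to squeeze the norm), which together force strong continuity. This is the only genuinely delicate point; the remaining steps are routine consequences of Lemmas \ref{le1} and \ref{le2}.
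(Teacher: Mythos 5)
Your proposal is correct, but note that the paper itself contains no proof of this lemma: it is quoted verbatim from the reference \cite{Y.Y} (Yan--Yin), where local well-posedness is established in $B^s_{p,r}$ precisely by the scheme you describe --- the product laws of Lemma \ref{le1} combined with the transport estimate of Lemma \ref{le2}, a Friedrichs-type iteration $\pa_t u^{n+1}+u^n\cd\na u^{n+1}=P(u^n,u^n)$ with uniform bounds in $B^s_{2,2}$, convergence of the differences one derivative lower, and the usual upgrade to continuity in the top norm. So your argument is essentially the proof of the cited source rather than an alternative route; in particular your identification of the two genuinely nontrivial points (the mapping property $(1-\De)^{-1}\D:B^{s-1}_{2,2}\to B^{s}_{2,2}$, $(1-\De)^{-1}:B^{s-1}_{2,2}\to B^{s+1}_{2,2}$ applied to the quadratic terms, and the recovery of strong continuity $u\in\mathcal{C}([0,T];B^s_{2,2})$ at the limiting regularity) is accurate, and your Riccati comparison reproduces exactly the stated lifespan bound $||u(t)||_{B^s_{2,2}}\leq ||u_0||_{B^s_{2,2}}/(1-C_sT||u_0||_{B^s_{2,2}})$.
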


\begin{coro}\label{co1}
Let $s>1+\frac d2,\ d\geq 2$. Let $u\in C([0,T];H^s)$ be the solution of the system \eqref{E-P}. Then, we have for all $t\in[0,T]$,
\bbal
||u(t)||_{B^{s-1}_{2,2}(\R^d)}\leq ||u_0||_{B^{s-1}_{2,2}(\R^d)}e^{\int^t_0||u(\tau)||_{B^s_{2,2}(\R^d)}\dd \tau},
\end{align*}
and
\bbal
||u(t)||_{B^{s+1}_{2,2}(\R^d)}\leq ||u_0||_{B^{s+1}_{2,2}(\R^d)}e^{\int^t_0||u(\tau)||_{B^s_{2,2}(\R^d)}\dd \tau}.
\end{align*}
\end{coro}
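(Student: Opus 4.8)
The plan is to prove both estimates by applying the transport-type a priori estimate of Lemma \ref{le2} to the equation \eqref{E-P1} in the Sobolev scale, once at regularity $s-1$ and once at regularity $s+1$. The key observation is that \eqref{E-P1} is a transport equation $\pa_tu+u\cd\na u=P(u,u)$ with the convection velocity $v=u$ itself, so I would invoke the second (Gr\"onwall-type) form of Lemma \ref{le2} with $v=u$ and forcing $F=P(u,u)$.

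\textbf{Step 1: Set up the transport estimate.} First I would fix $\sigma\in\{s-1,s+1\}$ and note that since $s>1+\frac d2$, both values satisfy $\sigma>\frac d2-1$, so Lemma \ref{le2} applies. Because $s>\frac d2$, the relevant velocity norm in $V(t)$ is $\int_0^t\|\na u\|_{B^{s-1}_{2,2}}\,\dd\tau$; and since $\|\na u\|_{B^{s-1}_{2,2}}\lesssim\|u\|_{B^s_{2,2}}$, I can control $V'(\tau)$ by $\|u(\tau)\|_{B^s_{2,2}}$. Applying the exponential form of Lemma \ref{le2} then yields
\bbal
\|u(t)\|_{B^\sigma_{2,2}}\leq e^{C\int_0^t\|u(\tau)\|_{B^s_{2,2}}\,\dd\tau}\Big(\|u_0\|_{B^\sigma_{2,2}}+\int_0^t e^{-C\int_0^\tau\|u\|_{B^s_{2,2}}\,\dd\tau'}\|P(u,u)(\tau)\|_{B^\sigma_{2,2}}\,\dd\tau\Big).
\end{align*}

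\textbf{Step 2: Estimate the forcing term.} The crux is to bound $\|P(u,u)\|_{B^\sigma_{2,2}}$ by $\|u\|_{B^s_{2,2}}\|u\|_{B^\sigma_{2,2}}$ so that the forcing can be absorbed into the Gr\"onwall argument. Recall $P(u,u)=-(1-\De)^{-1}\D Q(u,u)-(1-\De)^{-1}R(u,u)$. Since $(1-\De)^{-1}\D$ and $(1-\De)^{-1}$ are smoothing operators of order $1$ and $2$ respectively, I would trade two derivatives: $\|P(u,u)\|_{B^\sigma_{2,2}}\lesssim\|Q(u,u)\|_{B^{\sigma-1}_{2,2}}+\|R(u,u)\|_{B^{\sigma}_{2,2}}$. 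Both $Q$ and $R$ are quadratic in $u$ with one derivative on each factor (for $Q$) or one factor (for $R$), so by the product estimates of Lemma \ref{le1} each bilinear term is controlled by $\|u\|_{B^s_{2,2}}\|u\|_{B^\sigma_{2,2}}$ (using that one factor always lives at the high regularity $s$). This gives $\|P(u,u)\|_{B^\sigma_{2,2}}\lesssim\|u\|_{B^s_{2,2}}\|u\|_{B^\sigma_{2,2}}$ for $\sigma\in\{s-1,s+1\}$.

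\textbf{Step 3: Close the Gr\"onwall argument.} Combining Steps 1 and 2, the function $t\mapsto\|u(t)\|_{B^\sigma_{2,2}}$ satisfies a linear integral inequality with coefficient $\sim\|u(\tau)\|_{B^s_{2,2}}$, whose resolution by Gr\"onwall's lemma produces exactly the claimed bound $\|u(t)\|_{B^\sigma_{2,2}}\leq\|u_0\|_{B^\sigma_{2,2}}\,e^{\int_0^t\|u(\tau)\|_{B^s_{2,2}}\,\dd\tau}$, up to adjusting the constant $C$ (which the statement has absorbed into the exponent).

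\textbf{The main obstacle} I anticipate is Step 2: one must verify that each of the several bilinear terms making up $Q$ and $R$ can be estimated so that the \emph{high}-regularity factor always carries the norm $\|u\|_{B^s_{2,2}}$ and the remaining factor carries $\|u\|_{B^\sigma_{2,2}}$, for both $\sigma=s-1$ and $\sigma=s+1$. The case $\sigma=s+1$ is the more delicate one, since here the forcing must be controlled in a \emph{higher} norm than $s$, so Lemma \ref{le1} must be applied in the form that keeps one factor at regularity $s+1$ and the other merely at $s$; the gain of two derivatives from $(1-\De)^{-1}\D$ and $(1-\De)^{-1}$ is exactly what makes this possible, and tracking the derivative counts in $Q$ and $R$ carefully is where the real work lies.
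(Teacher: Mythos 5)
Your overall strategy is exactly the one the paper intends (the paper's own ``proof'' is a one\-/line remark that the result follows from Lemma \ref{le2} and Gronwall, with all details omitted), so the only real question is whether your Step 2 closes. As written it does not quite: the bound $\|P(u,u)\|_{B^{\sigma}_{2,2}}\lesssim \|Q(u,u)\|_{B^{\sigma-1}_{2,2}}+\|R(u,u)\|_{B^{\sigma}_{2,2}}$ is too lossy for the $R$\-/part when $\sigma=s-1$. Since $R(u,u)=(\D u)\,u+u\cd\na u$ carries a full derivative of $u$, any application of Lemma \ref{le1} at the level $B^{s-1}_{2,2}$ gives at best $\|u\cd\na u\|_{B^{s-1}_{2,2}}\lesssim \|\na u\|_{B^{s-1}_{2,2}}\|u\|_{B^{s}_{2,2}}\lesssim \|u\|_{B^{s}_{2,2}}^{2}$, which is \emph{not} of the multiplicative form $\|u\|_{B^{s}_{2,2}}\|u\|_{B^{s-1}_{2,2}}$. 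This is not a cosmetic point: the corollary's conclusion has no additive term, and in the application one has $\|u_{0}\|_{B^{s-1}_{2,2}}\sim n^{-1+\delta/2}\to 0$ while $\|u\|_{B^{s}_{2,2}}\sim 1$, so an extra $\int_{0}^{t}\|u\|_{B^{s}_{2,2}}^{2}\,\dd\tau$ in the Duhamel term would destroy the decay that \eqref{eq4} needs. The fix is to actually exploit the two\-/derivative gain of $(1-\De)^{-1}$ that you mention and then discard: estimate $\|(1-\De)^{-1}R(u,u)\|_{B^{s-1}_{2,2}}\lesssim\|R(u,u)\|_{B^{s-2}_{2,2}}$ and apply Lemma \ref{le1} at the shifted level (i.e.\ $\|fg\|_{B^{s-2}_{2,2}}\lesssim\|f\|_{B^{s-2}_{2,2}}\|g\|_{B^{s-1}_{2,2}}$, valid because $s-1>\frac d2$), which gives $\|u\cd\na u\|_{B^{s-2}_{2,2}}\lesssim\|\na u\|_{B^{s-2}_{2,2}}\|u\|_{B^{s-1}_{2,2}}\lesssim\|u\|_{B^{s-1}_{2,2}}\|u\|_{B^{s}_{2,2}}$, as required.

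A symmetric correction is needed at $\sigma=s+1$, which you rightly identify as the delicate case but do not resolve. For the $Q$\-/part, the algebra property yields $\|\na u\,\na u\|_{B^{s}_{2,2}}\lesssim\|\na u\|_{B^{s}_{2,2}}^{2}\sim\|u\|_{B^{s+1}_{2,2}}^{2}$, again quadratic in the top norm, so the resulting differential inequality would be of Riccati type rather than linear; you must instead use the tame (first) inequality of Lemma \ref{le1} at level $B^{s}_{2,2}$, namely $\|\na u\,\na u\|_{B^{s}_{2,2}}\lesssim\|\na u\|_{L^{\infty}}\|\na u\|_{B^{s}_{2,2}}\lesssim\|u\|_{B^{s}_{2,2}}\|u\|_{B^{s+1}_{2,2}}$, where $\|\na u\|_{L^{\infty}}\lesssim\|\na u\|_{B^{s-1}_{2,2}}$ since $s-1>\frac d2$. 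For the $R$\-/part at this level the two\-/derivative gain again saves you: $\|(1-\De)^{-1}R(u,u)\|_{B^{s+1}_{2,2}}\lesssim\|R(u,u)\|_{B^{s-1}_{2,2}}\lesssim\|u\|_{B^{s}_{2,2}}^{2}\leq\|u\|_{B^{s}_{2,2}}\|u\|_{B^{s+1}_{2,2}}$. With these two repairs your Step 3 Gronwall argument closes and gives exactly the stated bounds; the remainder of the proposal is correct and coincides with the paper's (unwritten) argument.
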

\begin{proof}
The results can easily deduce from Lemma \ref{le2} and Gronwall's inequality. Here, we omit it.
\end{proof}

Now, we give the details of the proof to our theorem. \\
\textbf{Proof of the main theorem.} Set $0<\delta<\frac12$  and let $\phi$ be a $C_0(\mathbb{R})$ function such that
\begin{equation*}
\phi(x)=
\begin{cases}
1, \quad |x|\leq 1,\\
0, \quad |x|\geq 2.
\end{cases}
\end{equation*}
Moreover, let $\Phi$ be a $C_0(\mathbb{R})$ function such that $\Phi(x)\phi(x)=\phi(x)$. Firstly, we choose the velocity having the following form:
\bbal
u^{\omega,n}=u^{l,\omega,n}(t,x)+u^{h,\omega,n}(t,x), \qquad \omega\in\{\pm1\},\ x\in\R^d,\ t\in \R,
\end{align*}
where $u^{h,\omega,n}$ is the high frequency term
\bbal
u^{h,\omega,n}(t,x)=
\Big(\phi^{h,\omega,n}(t,x),0,\cdots,0\Big),
\end{align*}
with
\bbal
\phi^{h,\omega,n}(t,x)=
n^{-\frac12\delta-s}\phi(\frac{x_1}{n^\delta})\sin(n x_1-\omega t)\phi(x_2)\cdots \phi(x_d),
\qquad n \in \Z.
\end{align*}
To choose the suitable low frequency term $u^{l,\omega,n}$, we let $u^{l,\omega,n}$ satisfy the following initial value problem:
\begin{align}\label{equa-1}\begin{cases}
\partial_tu^{l,\omega,n}+u^{l,\omega,n}\cdot \nabla u^{l,\omega,n}=P(u^{l,\omega,n},u^{l,\omega,n}),\\
u^{l,\omega,n}(0,x)=
\big(\omega n^{-1}\Phi(\frac{x_1}{n^{\delta}}){\Phi(x_2)\cdots \Phi(x_d)},0,\cdots,0\big).
\end{cases}\end{align}
By the well-posedness result (see Lemma \ref{le10}), $u^{l,\omega,n}$ belong to $C([0,T];B^s_{2,2})$ and have lifespan $T\simeq 1$. In order to simplify the notation, we set $\widetilde{\phi}(x_2,\cdots,x_d)=\phi(x_2)\cdots \phi(x_d)$ and  ${\widetilde{\Phi}(x_2,\cdots,x_d)}={\Phi(x_2)\cdots \Phi(x_d)}$. Thus, we can find that the constructional solution $u^{\omega,n}$ satisfies the following equations:
\begin{footnotesize}
\bbal
&\quad \partial_tu^{\omega,n}+u^{\omega,n}\cdot \nabla u^{\omega,n}=P(u^{\omega,n},u^{\omega,n})+\underbrace{\partial_tu^{h,\omega,n}+u^{l,\omega,n}\cdot \nabla u^{h,\omega,n}}_{E^{\omega,n}}
\\&+\underbrace{u^{h,\omega,n}\cdot \nabla u^{h,\omega,n}+u^{h,\omega,n}\cdot \nabla u^{l,\omega,n}-P(u^{l,\omega,n},u^{h,\omega,n})-P(u^{h,\omega,n},u^{l,\omega,n})-P(u^{h,\omega,n},u^{h,\omega,n})}_{F^{\omega,n}},
\end{align*}
\end{footnotesize}
with initial data
\begin{small}
\bbal
u^{\omega,n}_0:&=u^{\omega,n}(0,x) \\
&=(n^{-\frac12\delta-s}\phi(\frac{x_1}{n^\delta})\sin(n x_1-\omega t)\widetilde{\phi}(x_2,\cdots,x_d)
-\omega n^{-1}\Phi(\frac{x_1}{n^{\delta}}){\widetilde{\Phi}(x_2,\cdots,x_d)},0,\cdots,0\big).
\end{align*}
\end{small}
Let us consider the actual solution $u_{\omega,n}$ which is the solution of \eqref{E-P1} with the same initial data $u^{\omega,n}_0$. Then, $u_{\omega,n}$ satisfies
\bbal
\partial_tu_{\omega,n}+u_{\omega,n}\cdot \nabla u_{\omega,n}&=P(u_{\omega,n},u_{\omega,n}),
\end{align*}
with initial data
\begin{small}
\bbal
u_{\omega,n}(0,x)=(n^{-\frac12\delta-s}\phi(\frac{x_1}{n^\delta})\sin(n x_1-\omega t)\widetilde{\phi}(x_2,\cdots,x_d)
-\omega n^{-1}\Phi(\frac{x_1}{n^{\delta}}){\widetilde{\Phi}(x_2,\cdots,x_d)},0,\cdots,0\big).
\end{align*}
\end{small}
By the well-posedness result (see Lemma \ref{le10}), the solution $u_{\omega,n}$ belong to $C([0,T];B^s_{2,2})$ and have common lifespan $T\simeq 1$. For the estimates of $u_{\omega,n}$, we get from Corollary \ref{co1} that
\bal\label{eq4}\begin{split}
&||u_{\omega,n}||_{L^\infty_T(B^{s-1}_{2,2}(\R^d))}\lesssim n^{-1+\frac12\delta}, \  ||u_{\omega,n}||_{L^\infty_T(B^{s}_{2,2}(\R^d))}\lesssim  1, \ ||u_{\omega,n}||_{L^\infty_T(B^{s+1}_{2,2}(\R^d))}\lesssim n.
\end{split}\end{align}
Next, considering the difference $v=u_{\omega,n}-u^{\omega,n}$, we observe that $v$ satisfy
\begin{align*}
\partial_tv+u_{\omega,n}\cd\na v +v \cd\na u^{\omega,n}&=P(u_{\omega,n},v)+P(v,u^{\omega,n})-E^{\omega,n}-F^{\omega,n},
\end{align*}
with initial data $v_0=0$.\\

Now, we shall estimate the difference $v$ in the Sobolev $B^{s-1}_{2,2}$ norm. We hope that the decay of $||v||_{B^{s-1}_{2,2}}$ is less than $n^{-1}$.  According to Lemmas \ref{le1}-\ref{le2}, we have for all $t\in[0,T]$,
\bal\label{eq-sum}\begin{split}
||v(t)||_{B^{s-1}_{2,2}(\R^d)}&\leq C\int^t_0||v||_{B^{s-1}_{2,2}}\big(||u^{\omega,n}||_{B^{s}_{2,2}(\R^d)}+||u_{\omega,n}||_{B^{s}_{2,2}(\R^d)}\big)\dd \tau
\\&\quad +C\int^t_0\big(||E^{\omega,n}||_{B^{s-1}_{2,2}(\R^d)}+||F^{\omega,n}||_{B^{s-1}_{2,2}(\R^d)}\big)\dd \tau.
\end{split}\end{align}
Hence, we shall estimate the terms $E^{\omega,n}$ and $F^{\omega,n}$ in Sobolev $B^{s-1}_{2,2}$ norm. To obtain the desired result, we need to estimate the terms $u^{l,\omega,n}$ and $u^{h,\omega,n}$. By Lemma \ref{le3} and Corollary \ref{co1} , we have for any $r\geq 0$ and $t\in[0,T]$,
\bal\label{eq4-0}\begin{split}
&||u^{l,\omega,n}(t)||_{B^{s+1}_{2,2}(\R^d)}\lesssim ||u^{l,\omega,n}_0||_{B^{s+1}_{2,2}(\R^d)}\lesssim n^{-1+\frac12\delta}, \quad ||u^{h,\omega,n}(t)||_{B^r_{2,2}(\R^d)}\lesssim n^{r-s},
\\& ||u^{h,\omega,n}(t)||_{L^\infty(\R^d)}\lesssim n^{-2}, \qquad ||\na u^{h,\omega,n}(t)||_{L^\infty(\R^d)}\lesssim n^{-1}.
\end{split}\end{align}
Collecting the following estimates
\bbal
||u^{h,\omega,n}\cdot \nabla u^{l,\omega,n}||_{B^{s-1}_{2,2}(\R^d)}
\lesssim ||u^{h,\omega,n}||_{B^{s-1}_{2,2}(\R^d)}||u^{l,\omega,n}||_{B^{s}_{2,2}(\R^d)}\lesssim n^{-2+\frac12\delta},
\end{align*}
\bbal
||u^{h,\omega,n}\cdot \nabla u^{h,\omega,n}||_{B^{s-1}_{2,2}(\R^d)}
\lesssim &||u^{h,\omega,n}||_{B^{s-1}_{2,2}(\R^d)}||\nabla u^{h,\omega,n}||_{L^\infty(\R^d)}
\\& \ +||u^{h,\omega,n}||_{B^{s}_{2,2}(\R^d)}||u^{h,\omega,n}||_{L^\infty(\R^d)}\lesssim n^{-2},
\end{align*}
\bbal
||Q(u^{h,\omega,n},u^{h,\omega,n})||_{B^{s-2}_{2,2}(\R^d)}
\lesssim ||u^{h,\omega,n}||_{B^{s-1}_{2,2}(\R^d)}||\nabla u^{h,\omega,n}||_{L^\infty(\R^d)}\lesssim n^{-2},
\end{align*}
\bbal
||Q(u^{l,\omega,n},u^{h,\omega,n})||_{B^{s-2}_{2,2}(\R^d)}
\lesssim ||u^{l,\omega,n}||_{B^{s}_{2,2}(\R^d)}||u^{h,\omega,n}||_{B^{s-1}_{2,2}(\R^d)}\lesssim n^{-2+\frac12\delta},
\end{align*}
\bbal
||Q(u^{h,\omega,n},u^{l,\omega,n})||_{B^{s-2}_{2,2}(\R^d)}
\lesssim ||u^{l,\omega,n}||_{B^{s}_{2,2}(\R^d)}||u^{h,\omega,n}||_{B^{s-1}_{2,2}(\R^d)}\lesssim n^{-2+\frac12\delta},
\end{align*}
\bbal
||R(u^{h,\omega,n},u^{h,\omega,n})||_{B^{s-2}_{2,2}(\R^d)}
\lesssim ||u^{h,\omega,n}||_{B^{s-1}_{2,2}(\R^d)}||u^{h,\omega,n}||_{B^{s-1}_{2,2}(\R^d)}\lesssim n^{-2},
\end{align*}
\bbal
||R(u^{l,\omega,n},u^{h,\omega,n})||_{B^{s-2}_{2,2}(\R^d)}
\lesssim ||u^{l,\omega,n}||_{B^{s-1}_{2,2}(\R^d)}||u^{h,\omega,n}||_{B^{s-1}_{2,2}(\R^d)}\lesssim n^{-2+\frac12\delta},
\end{align*}
\bbal
||R(u^{h,\omega,n},u^{l,\omega,n})||_{B^{s-2}_{2,2}(\R^d)}
\lesssim ||u^{l,\omega,n}||_{B^{s-1}_{2,2}(\R^d)}||u^{h,\omega,n}||_{B^{s-1}_{2,2}(\R^d)}\lesssim n^{-2+\frac12\delta},
\end{align*}
we can infer that
\bal\label{eq4-1}\begin{split}
||F^{\omega,n}||_{B^{s-1}_{2,2}(\R^d)}\lesssim n^{-2+\frac12\delta}.
\end{split}\end{align}
For the term $E^{\omega,n}$, we can estimate it by component. Direct calculation shows that $\big(E^{\omega,n}\big)_i=0$ for $i=2,\cdots d$. According to the definition of $u^{l,\omega,n}(0,x)$, we can write the first component of $\pa_tu^{h,\omega,n}$ and $u^{l,\omega,n}\cd \na u^{h,\omega,n}$ in the form
\bbal
\Big(\pa_tu^{h,\omega,n}\Big)_1(t,x)&=-\omega n^{-\frac12\delta-s}\phi(\frac{x_1}{n^\delta})\cos(n x_1-\omega t)\widetilde{\phi}(x_2,\cdots,x_d)
\\&=-n^{-s+1-\frac12\delta}u^{l,\omega,n}_1(0,x)\phi(\frac{x_1}{n^\delta})\cos(nx_1-\omega t)\widetilde{\phi}(x_2,\cdots,x_d),
\end{align*}
and
\bbal
&\quad\Big(u^{l,\omega,n}\cd \na u^{h,\omega,n}\Big)_1(t,x)
\\&=n^{-s+1-\frac12\delta}u^{l,\omega,n}_1(t,x)\phi(\frac{x_1}{n^\delta})\cos(nx_1-\omega t)\widetilde{\phi}(x_2,\cdots,x_d)
\\&\quad +n^{-s-\frac32\delta}u^{l,\omega,n}_1(t,x)\phi'(\frac{x_1}{n^\delta})\sin(nx_1-\omega t)\widetilde{\phi}(x_2,\cdots,x_d)
\\&\quad +n^{-\frac12\delta-s}\phi(\frac{x_1}{n^\delta})\sin(nx_1-\omega t)\sum^d_{i=2}u^{l,\omega,n}_i(t,x)\pa_{x_i}\widetilde{\phi}(x_2,\cdots,x_d).
\end{align*}
Therefore, the term $\big(E^{\omega,n}\big)_1$ can be written as
\bbal
&\quad\Big(\pa_tu^{h,\omega,n}+u^{l,\omega,n}\cd \na u^{h,\omega,n}\Big)_1(t,x)
\\&=n^{-s+1-\frac12\delta}[u^{l,\omega,n}_1(t,x)-u^{l,\omega,n}_1(0,x)]\phi(\frac{x_1}{n^\delta})\cos(nx_1-\omega t)\widetilde{\phi}(x_2,\cdots,x_d)
\\&\quad -n^{-s-\frac32\delta}u^{l,\omega,n}_1(t,x)\phi'(\frac{x_1}{n^\delta})\sin(nx_1-\omega t)\widetilde{\phi}(x_2,\cdots,x_d)
\\&\quad +n^{-\frac12\delta-s}\phi(\frac{x_1}{n^\delta})\sin(nx_1-\omega t)\sum^d_{i=2}u^{l,\omega,n}_i(t,x)\pa_{x_i}\widetilde{\phi}(x_2,\cdots,x_d)
\\&:=E^{\omega,n}_{1,1}(t)+E^{\omega,n}_{1,2}(t)+E^{\omega,n}_{1,3}(t).
\end{align*}
By Lemmas \ref{le1}-\ref{le3}, we can estimate the last two terms as follows:
\bal\label{eq5-1}\begin{split}
||E^{\omega,n}_{1,2}(t)||_{B^{s-1}_{2,2}(\R^d)}
&\lesssim n^{-s-\frac32\delta}||u^{l,\omega,n}_1(t,x)||_{B^{s}_{2,2}(\R^d)}
\\& \quad \times ||\phi'(\frac{x_1}{n^\delta})\sin(nx_1-\omega t)||_{B^{s-1}_{2,2}(\R)}||\widetilde{\phi}||_{B^{s-1}_{2,2}(\R^{d-1})}
\\&\lesssim  n^{-s-\frac32\delta}\cdot n^{-1+\frac12\delta}\cdot n^{s-1+\frac12\delta}\lesssim n^{-2},
\end{split}\end{align}
and
\bal\label{eq5-2}\begin{split}
||E^{\omega,n}_{1,3}(t)||_{B^{s-1}_{2,2}(\R^d)}
&\lesssim n^{-s-\frac12\delta}||u^{l,\omega,n}(t,x)||_{B^{s}_{2,2}(\R^d)}
\\&\quad \times ||\phi(\frac{x_1}{n^\delta})\sin(nx_1-\omega t)||_{B^{s-1}_{2,2}(\R)}||\widetilde{\phi}||_{B^{s}_{2,2}(\R^{d-1})}
\\&\lesssim n^{-s-\frac12\delta}\cdot n^{-1+\frac12\delta}\cdot n^{s-1+\frac12\delta} \lesssim n^{-2+\frac12\delta}.
\end{split}\end{align}
For the term $E^{\omega,n}_{1,1}(t)$, by Lemmas \ref{le1}-\ref{le3}, we can compute it as
\bal\label{eq5-3}\begin{split}
||E^{\omega,n}_{1,1}(t)||_{B^{s-1}_{2,2}(\R^d)}
&\lesssim n^{-s+1-\frac12\delta}||u^{l,\omega,n}(t)-u^{l,\omega,n}(0)||_{B^{s}_{2,2}(\R^d)}
\\&\quad \times||\phi(\frac{x_1}{n^\delta})\sin(nx_1-\omega t)||_{B^{s-1}_{2,2}(\R)}||\widetilde{\phi}||_{B^{s}_{2,2}(\R^{d-1})}
\\&\lesssim  n^{-s+1-\frac12\delta}\cd  n^{s-1+\frac12\delta} ||u^{l,\omega,n}(t)-u^{l,\omega,n}(0)||_{B^{s}_{2,2}(\R^d)}
\\&\lesssim  ||u^{l,\omega,n}(t)-u^{l,\omega,n}(0)||_{B^{s}_{2,2}(\R^d)}.
\end{split}\end{align}
Since $u^{l,\omega,n}$ is the solution of \eqref{equa-1}, then we can estimate the integral term above by
\bal\label{eq5-4}\begin{split}
&\quad ||u^{l,\omega,n}(t)-u^{l,\omega,n}(0)||_{B^{s}_{2,2}(\R^d)}
\\&\leq \int^t_0||\pa_\tau u^{l,\omega,n}(\tau)||_{B^{s}_{2,2}(\R^d)}\dd \tau
\\&\leq\int^t_0\big(||u^{l,\omega,n}\cd \na u^{l,\omega,n}||_{B^{s}_{2,2}(\R^d)}+||P(u^{l,\omega,n},u^{l,\omega,n})||_{B^{s}_{2,2}(\R^d)}\big)\dd \tau
\\&\lesssim ||u^{l,\omega,n}||_{B^{s}_{2,2}(\R^d)}||u^{l,\omega,n}||_{B^{s+1}_{2,2}(\R^d)}\lesssim n^{-1+\frac12\delta}\cd n^{-1+\frac12\delta}\lesssim n^{-2+\delta}.
\end{split}\end{align}
Combining these estimates \eqref{eq5-1}-\eqref{eq5-4}, we get
\bal\label{eq4-2}
||E^{\omega,n}||_{B^{s-1}_{2,2}(\R^d)}\lesssim n^{-2+\delta}.
\end{align}
Plugging \eqref{eq4}, \eqref{eq4-1} and \eqref{eq4-2} into \eqref{eq-sum}, we have for all $t\in[0,T]$,
\bbal
||v(t)||_{B^{s-1}_{2,2}(\R^d)}&\leq C\int^t_0||v||_{B^{s-1}_{2,2}(\R^d)}\dd \tau+Cn^{-2+\delta}.
\end{align*}
This alongs with Growall's inequallity yields
\bal\label{eq4-3}
||v||_{L^\infty_T(B^{s-1}_{2,2}(\R^d))}\leq Cn^{-2+\delta}.
\end{align}
Noticing that
\bbal
||v||_{L^\infty_T(B^{s+1}_{2,2}(\R^d))}\lesssim ||u_{\omega,n}||_{L^\infty_T(B^{s+1}_{2,2}(\R^d))}+||u^{\omega,n}||_{L^\infty_T(B^{s+1}_{2,2}(\R^d))}\lesssim n,
\end{align*}
and using the interpolation inequality and \eqref{eq4-3}, we have
\bal\label{eq4-10}\begin{split}
||v||_{{L^\infty_T(B^{s}_{2,2}(\R^d))}}\leq ||v||^{\frac12}_{L^\infty_T(B^{s-1}_{2,2}(\R^d))}||v||^{\frac12}_{L^\infty_T(B^{s+1}_{2,2}(\R^d))}\leq Cn^{-\frac12+\frac12\delta}.
\end{split}\end{align}
Combining \eqref{eq4-0} and \eqref{eq4-10}, then there exists some positive constant $c_0$ such that
\begin{align*}\begin{split}
&\quad||u_{1,n}(t)-u_{-1,n}(t)||_{B^{s}_{2,2}(\R^d)}
\\&\geq ||u^{1,n}(t)-u^{-1,n}(t)||_{B^{s}_{2,2}(\R^d)}-C\varepsilon_n
\\&\geq ||u^{h,1,n}(t)-u^{h,-1,n}(t)||_{B^{s}_{2,2}(\R^d)}-C\varepsilon'_n
\\&\geq 2|\sin t|\cdot||n^{-\frac12\delta-s}\phi(\frac{x_1}{n^\delta})\cos(n x_1)\widetilde{\phi}(x_2,\cdots,x_d)||_{B^{s}_{2,2}(\R^d)}-C\varepsilon'_n
\\&\geq c_0|\sin t|\cdot||n^{-\frac12\delta-s}\phi(\frac{x_1}{n^\delta})\cos(n x_1)||_{B^{s}_{2,2}(\R)}||\widetilde{\phi}(x_2,\cdots,x_d)||_{B^{s}_{2,2}(\R^{d-1})}-C\varepsilon'_n,
\end{split}\end{align*}
where
\bbal
\varepsilon_n=n^{-\frac12+\frac12\delta}, \qquad \varepsilon'_n=n^{-\frac12+\frac12\delta}+n^{\frac12\delta-1}.
\end{align*}
Letting $n$ go to $\infty$, we can show that
\begin{align}\label{eq4-11}\begin{split}
\liminf_{n\rightarrow \infty}||u_{1,n}(t)-u_{-1,n}(t)||_{B^{s}_{2,2}(\R^d)}\gtrsim |\sin t|.
\end{split}\end{align}
Notice that $u^{h,1,n}(0,x)=u^{h,-1,n}(0,x)$, we get from Lemma \ref{le3} that
\bal\begin{split}\label{eq4-12}
&\quad||u_{1,n}(0,x)-u_{-1,n}(0,x)||_{B^{s}_{2,2}(\R^d)}
\\&=||u^{l,1,n}(0,x)-u^{l,-1,n}(0,x)||_{B^{s}_{2,2}(\R^d)}\leq Cn^{-1+\frac{1}{2}\delta}\rightarrow 0, \qquad n\rightarrow \infty.
\end{split}\end{align}
Then \eqref{eq4-11} together with \eqref{eq4-12} complete the proof of Theorem \ref{th2}.

\vspace*{1em}
\noindent\textbf{Acknowledgements.}  This work was partially supported by NSFC (No.11801090).

\end{document}